\newtheorem{theorem}{Theorem}[section]
\newtheorem{lemma}[theorem]{Lemma}
\newtheorem{remark}[theorem]{Remark}
\numberwithin{equation}{section}
\title{A Gauss-Kuzmin-L\'evy theorem for R\'enyi-type continued fractions}
\author{
    Dan Lascu\footnote{e-mail: lascudan@gmail.com.}\nonumber \\
    \emph{\small Mircea cel Batran Naval Academy, 1 Fulgerului, 900218 Constanta,
    Romania} \\
    and\\
    Gabriela Ileana Sebe\footnote{e-mail: igsebe@yahoo.com.} \\
    \emph{\small Politehnica University of Bucharest, Faculty of Applied Sciences},\\
    \emph{\small Splaiul Independentei 313, 060042, Bucharest, Romania} and \\
    \emph{\small Institute of Mathematical Statistics and Applied Mathematics}, \\
     \emph{\small Calea 13 Sept. 13, 050711 Bucharest, Romania}
    }
\begin{document}
\maketitle
\thispagestyle{empty}
\begin{abstract}
We consider an interval map which is a generalization of the R\'enyi transformation.
For the continued fraction expansion arising from this transformation, we prove a result concerning the asymptotic behavior of the distribution functions of this map. More exactly, we use Sz\"usz's method to prove a Gauss-Kuzmin-L\'evy-type theorem.
\end{abstract}
{\bf Mathematics Subject Classifications (2010): 11J70, 60A10}  \\
{\bf Key words}: R\'enyi continued fractions, Gauss-Kuzmin-L\'evy problem

\section{Introduction}
The present paper arises out of series of papers dedicated to R\'enyi-type continued fraction expansions \cite{Sebe&Lascu-2018-1, Sebe&Lascu-2018-2}. Actually, these continued fractions are a particular case of \textit{$u$-backward continued fractions} studied by Gr\"ochenig and Haas \cite{Grochenig&Haas1-1996}.
In 1953, R\'enyi \cite{Renyi-1957} showed that every irrational number $x \in [0, 1)$ has an infinite continued fraction expansion of the form
\begin{equation}
x = 1 - \displaystyle \frac{1}{n_1 - \displaystyle \frac{1}{n_2 - \displaystyle \frac{1}{n_3 - \ddots }}} =: [n_1, n_2, n_3, \ldots]_b, \label{1.1}
\end{equation}
where each $n_i$ is an integer greater than one. We call the expansion in (\ref{1.1}) \textit{backward continued fraction}.
The underlying dynamical system is the R\'enyi map $R$ defined from $[0, 1)$ to $[0, 1)$ by
\begin{equation}\label{1.2}
 R (x) := \frac{1}{1-x} - \left \lfloor \frac{1}{1-x} \right \rfloor,
\end{equation}
which has a neutral fixed point at $0$ and thus is nonuniformly hyperbolic. Here $\lfloor \cdot \rfloor$ denotes the floor function.
R\'enyi showed that the infinite measure $\mathrm{d}x/x$ is invariant for $R$.
This map does not possess a finite absolutely continuous invariant measure, and the usual inducing trick to study its thermodynamic formalism does not work.

Unlike the case (\ref{1.1}), the Gauss map defined from $[0, 1]$ to $[0, 1]$ by
\begin{equation}\label{1.3}
G (x) : = \frac{1}{x} - \left \lfloor \frac{1}{x} \right \rfloor,
\end{equation}
which generates the well-known \textit{regular continued fraction expansion}, possess a finite absolutely continuous invariant measure, namely \textit{Gauss measure} $\mathrm{d}x/{(x+1)}$.
The Gauss map is uniformly expanding, but has infinitely many branches.
The graph of $R$ can be obtained from that of the $G$ by reflecting the latter in the line $x = 1/2$.
It is for this reason that the continued fraction (\ref{1.1}) has been called ``backward".

%
%

Starting from expansion in (\ref{1.1}) and R\'enyi transformation $R$, Gr\"ochenig and Haas \cite{Grochenig&Haas1-1996} define the family of maps
$T_u (x) :=  \frac{1}{u(1-x)} - \lfloor \frac{1}{u(1-x)} \rfloor$, where $u>0$, $x \in [0, 1)$.
Given $u \in (0, 4)$ and $x \in [0, 1)$, $x$ has the $u$-\textit{backward continued fraction expansion}
\begin{equation}
x = 1 - \displaystyle \frac{1}{u n_1 - \displaystyle \frac{1}{n_2 - \displaystyle \frac{1}{u n_3 - \displaystyle \frac{1}{u n_4 \ddots}}}} =:[a_1, a_2, a_3, \ldots]_u, \label{1.4}
\end{equation}
where the integers $n_i=1+a_i \geq 2$ and the coefficient of $n_i$ is $1$ or $u$, depending on the parity of $i$.
In the particular case $u = 1/N$, for positive integers $N \geq 2$, they have identified a finite absolutely continuous invariant measure for $T_u$, namely $\mathrm{d}x / (x+N-1)$.
For $u = 1/N$, where $N$ is a positive integer greater than or equal to $2$, we will call $T_u$ the \textit{R\'enyi-type continued fraction transformation} denoted by $R_N$.

The metrical theory of this algorithm was initiated in \cite{Sebe&Lascu-2018-1}.
The first known metrical problem concerning (regular) continued fractions is due to Gauss.
At the start of the $20$th century an old discovery of Gauss tied the theory of continued fractions to that of probability theory and ergodic theory. In 1802 and 1812 Gauss found the invariant measure of the transformation underlying the regular continued fraction, $G$ in (\ref{1.3}), and asked Lagrange in a letter in 1812 how fast $\lambda \left( G^{-n} ([0, x]) \right)$ converges to the invariant measure
$\gamma ([0, x]) = \log(1+x)/ \log 2$.
Here $\lambda$ is the Lebesgue measure.
In 1928, Kuzmin \cite{Kuzmin-1928} answered to Gauss' question by giving an estimate of the remainder.
Independently in 1929 Paul L\'evy \cite{Levy-1929} improved Kuzmin's result and published another proof.
In the 60's Sz\"usz \cite{Szusz-1961} was able to prove the same result by using Kuzmin's approach.

The purpose of this paper is to prove a Gauss-Kuzmin-L\'evy-type theorem for the R\'enyi-type continued fraction expansions.
In order to solve the problem, we apply the method of Sz\"usz \cite{Szusz-1961}.
A version of Gauss-Kuzmin theorem for these expansions was also studied in \cite{Sebe&Lascu-2018-1} by applying the theory of random systems with complete connections by Iosifescu \cite{IG-2009}.
Namely, using the natural extension for R\'enyi-type continued fraction expansions, we obtained an infinite-order-chain representation of the sequence of the incomplete quotients of these expansions.
Together with the ergodic behaviour of a certain homogeneous random system with complete connections this allowed us to solve a variant of the Gauss-Kuzmin problem.
We mention that applying the Sz\"usz method, we obtain more information on the convergence rate involved.
The main novelty of this paper is the explicit expression in terms of Hurwitz zeta functions of $q_N$ that appears in Theorem \ref{Th.GKL}.
In addition, the estimate we have for $q_N$ shows that $q_N \rightarrow 0$ as $N \rightarrow \infty$.

\section{R\'enyi-type continued fractions}
In this section we briefly present known results about R\'enyi-type continued fractions.

Fix an integer $N \geq 2$. Let the \textit{R\'enyi-type continued fraction transformation} $R_N : [0, 1] \rightarrow [0, 1]$ be given by
\begin{equation} \label{2.1}
R_{N}(x) = \frac{N}{1-x}- \left\lfloor\frac{N}{1-x}\right\rfloor, x \neq 1; \quad R_{N}(1)=0.
\end{equation}
For any irrational $x \in [0, 1]$, $R_N$ generates a new continued fraction expansion of $x$ of the form
\begin{equation} \label{2.2}
x = 1 - \displaystyle \frac{N}{1+a_1 - \displaystyle \frac{N}{1+a_2 - \displaystyle \frac{N}{1+a_3 - \ddots}}} =:[a_1, a_2, a_3, \ldots]_R.
\end{equation}
Here, $a_n$'s are non-negative integers greater than or equal to $N$ defined by
\begin{equation} \label{2.3}
a_1:=a_1(x) = \left\lfloor \frac{N}{1-x} \right\rfloor, x \neq 1; \quad a_1(1)=\infty
\end{equation}
and
\begin{equation}
a_n := a_n(x) = a_1\left( R^{n-1}_N (x) \right), \quad n \geq 2, \label{2.4}
\end{equation}
with $R_{N}^0 (x) = x$.

The rational approximants to $x$ arise in a manner similar to that in the case of other continued fraction algorithms.
In particular we define two integer sequences by
$p_0=1$, $q_0=1$, $p_1=1+a_1-N$, $q_1=1+a_1$,
\begin{equation*}
p_n = (1+a_n) p_{n-1} - N p_{n-2} \mbox{ and } q_n = (1+a_n) q_{n-1} - N q_{n-2}
\end{equation*}
for $n \geq 2$.  A simple inductive argument gives
\begin{equation*}
p_{n-1}q_n - p_nq_{n-1} = N^n, \quad n \in \mathbb{N_+}:=\{1, 2, \ldots \},
\end{equation*}
and whence $p_n$ and $q_n$ are coprime.
The sequence of rationals $\left\{{p_n}/{q_n}\right\}$, ${n \in \mathbb{N_+}}$ are the convergents to $x$ in $[0, 1]$.
In \cite{Grochenig&Haas1-1996} it was shown that the dynamical system $\left([0,1],{\mathcal B}_{[0,1]}, R_N, \rho_N \right)$ is measure preserving and ergodic. Here, $\mathcal{B}_{[0,1]}$ denotes the $\sigma$-algebra of all Borel subsets of $[0,1]$,
and the probability measure $\rho_N$ is defined by
\begin{equation}
\rho_N (A) :=
\frac{1}{\log \left(\frac{N}{N-1}\right)} \int_{A} \frac{\mathrm{d}x}{x+N-1}, \quad A \in {\mathcal{B}}_{[0,1]}. \label{2.5}
\end{equation}

In \cite{Sebe&Lascu-2018-1} we investigated the Perron-Frobenius operator of $R_N$
on the measurable space $\left([0, 1], {\mathcal{B}}_{[0,1]}, \mu \right)$ such that the probability measure $\mu$ satisfies
$\mu\left(R_N^{-1}(A)\right) = 0$ whenever $\mu(A) = 0$ for $A \in {\mathcal{B}}_{[0, 1]}$.
Especially, we studied the Perron-Frobenius operator $U$ of $\left([0, 1], {\mathcal{B}}_{[0,1]}, \rho_N, R_N \right)$,
that is, $U$ is a unique operator on
$L^1([0,1], \rho_N):=\{f: [0, 1] \rightarrow \mathbb{C} : \int^{1}_{0} |f |\mathrm{d}\rho_N < \infty \}$ which satisfies
\begin{equation}
\int_{A} U f \,\mathrm{d}\rho_N = \int_{R_{N}^{-1}(A)} f\, \mathrm{d}\rho_N \quad \mbox{ for any }
A \in {\mathcal{B}}_{[0, 1]},\, f \in L^1 \left([0, 1], \rho_N \right). \label{2.6}
\end{equation}
Also, we have found an explicit formula for the Perron-Frobenius operator under the invariant measure $\rho_N$, namely
\begin{equation}
Uf(x) = \sum_{i \geq N} P_{N,i}(x)\,f\left(u_{N,i}(x)\right), \quad f \in L^1([0, 1], \rho_{N}), \label{2.7}
\end{equation}
where $P_{N,i}$ and $u_{N,i}$ are functions defined on $[0, 1]$ by:
\begin{equation}\label{2.8}
P_{N,i}(x) := \frac{x+N-1}{(x+i)\,(x+i-1)}
\end{equation}
and
\begin{equation}\label{2.9}
u_{N,i}(x) := 1 - \frac{N}{x+i}.
\end{equation}

A more thorough account of R\'enyi-type continued fractions can be found in \cite{Grochenig&Haas1-1996, Sebe&Lascu-2018-1, Sebe&Lascu-2018-2}.

\section{Main result}
In this section we show our main theorem.
Let $\mu$ be a non-atomic probability measure on $\mathcal{B}_{[0,1]}$ and define
\begin{eqnarray}
F_{N,0} (x) &:=& \mu ([0,x]), \ x \in [0, 1], \\ \label{3.1}
F_{N,n} (x) &:=& \mu (R_{N}^n \leq x), \ x \in [0, 1], \ n \in \mathbb{N}_+. \label{3.2}
\end{eqnarray}
Then the following holds.

\begin{theorem}(A Gauss-Kuzmin-L\'evy-type theorem) \label{Th.GKL}
Let $R_N$ and $F_{N,n}$ be as in $(\ref{2.1})$ and $(\ref{3.2})$.
Then there exists a constant $0 < q_N < 1$ such that $F_n$ can be written as
\begin{equation}
F_{N,n} (x) = \frac{1}{\log \left(\frac{N}{N-1}\right)} \log \left(\frac{x+N-1}{N-1}\right) + \mathcal{O}(q_N^n) \label{3.3}
\end{equation}
uniformly with respect to $x \in [0, 1]$.
\end{theorem}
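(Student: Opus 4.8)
The plan is to follow Sz\"usz's adaptation of Kuzmin's method, transferring the problem from the distribution functions $F_{N,n}$ to their densities and iterating the Perron--Frobenius-type operator. First I would write $F_{N,n}(x) = \int_0^x f_{N,n}(t)\,\mathrm{d}t$ for a density $f_{N,n}$ (after an initial smoothing step, since $\mu$ need only be non-atomic); more precisely, using the branch structure of $R_N$ one gets the recursion
\begin{equation*}
f_{N,n+1}(x) = \sum_{i \geq N} \frac{N}{(x+i)^2}\, f_{N,n}\!\left(1 - \frac{N}{x+i}\right),
\end{equation*}
which is the Perron--Frobenius operator of $R_N$ with respect to Lebesgue measure; comparing with $(\ref{2.7})$--$(\ref{2.9})$ this is just $U$ conjugated by the invariant density $h_N(x) = c_N/(x+N-1)$ with $c_N = 1/\log\frac{N}{N-1}$. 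The target claim is then equivalent to showing $f_{N,n}(x) \to h_N(x)$ uniformly at geometric rate $\mathcal{O}(q_N^n)$.

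The core of Sz\"usz's method is to run the recursion on $g_{N,n} := f_{N,n}/h_N$ rather than on $f_{N,n}$ directly, so that the fixed point becomes the constant function $1$, and to control oscillation via a differential inequality. Concretely, I would set $g_{N,n}(x) = (x+N-1)\,f_{N,n}(x)$ (up to normalization), derive the operator $V$ with $g_{N,n+1} = V g_{N,n}$, and then estimate the derivative $g_{N,n+1}'(x)$ in terms of $\sup g_{N,n} - \inf g_{N,n}$ or in terms of $\sup|g_{N,n}'|$. The key analytic input is a bound of the form
\begin{equation*}
\big|g_{N,n+1}'(x)\big| \leq q_N \sup_{t \in [0,1]} \big|g_{N,n}'(t)\big|
\end{equation*}
for a contraction factor $q_N < 1$; this is obtained by differentiating the series term by term, grouping the telescoping pieces $P_{N,i}$, and bounding the resulting sum of positive kernels against $1$ with a quantitative deficit. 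The deficit is exactly where the Hurwitz zeta functions enter: the sums $\sum_{i\geq N}(x+i)^{-2}$ and $\sum_{i \geq N}(x+i)^{-3}$ are $\zeta(2,x+N)$ and $\zeta(3,x+N)$, and evaluating them at the endpoints $x=0,1$ yields the explicit expression for $q_N$ claimed in the introduction, together with the monotonicity giving $q_N \to 0$ as $N \to \infty$.

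Once the derivative contraction is in place, I would conclude in the standard way: iterating gives $\sup|g_{N,n}'| \leq q_N^{\,n}\,\sup|g_{N,1}'| = \mathcal{O}(q_N^n)$ (the first iterate already has bounded derivative even though $g_{N,0}$ need not, which is why one step of smoothing is needed and why the bound is uniform in the initial $\mu$), hence $g_{N,n}$ is within $\mathcal{O}(q_N^n)$ of a constant in sup-norm; integrating against $h_N$ and using $\int_0^1 f_{N,n} = 1$ pins that constant to $1$, so $f_{N,n} = h_N + \mathcal{O}(q_N^n)$ uniformly. Integrating from $0$ to $x$ turns this into $(\ref{3.3})$, since $\int_0^x h_N(t)\,\mathrm{d}t = \frac{1}{\log(N/(N-1))}\log\frac{x+N-1}{N-1}$.

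I expect the main obstacle to be the derivative estimate, specifically proving that the contraction factor $q_N$ obtained by bounding the differentiated kernel series is genuinely $<1$ uniformly on $[0,1]$ and then identifying it in closed form. The term-by-term differentiation produces a sum in which the "good" (contracting) part and a "bad" part coming from the $x$-dependence of the branch maps $u_{N,i}$ must be separated carefully; one must check that the worst case over $x \in [0,1]$ occurs at an endpoint so that the Hurwitz zeta values $\zeta(2,N)$, $\zeta(2,N+1)$, $\zeta(3,N)$, $\zeta(3,N+1)$ suffice to express $q_N$, and then verify $q_N<1$ and $q_N\to 0$ from known asymptotics of $\zeta(s,a)$ as $a\to\infty$. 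A secondary technical point is justifying term-by-term differentiation and the interchange of summation with the recursion, which follows from uniform convergence of the series of derivatives on $[0,1]$ but should be stated explicitly.
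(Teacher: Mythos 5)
Your proposal follows essentially the same route as the paper: the Gauss--Kuzmin recursion for the distribution functions, differentiation and conjugation by the invariant density $(x+N-1)^{-1}$ to get the Perron--Frobenius operator acting on $f_{N,n}(x)=(x+N-1)F_{N,n}'(x)$, a derivative contraction $M_{N,n+1}\leq q_N M_{N,n}$ obtained by term-by-term differentiation with the telescoping form of $P_{N,i}$ and the mean value theorem, identification of $q_N=\zeta(3,N)+N\zeta(2,N)-1<1$ via Hurwitz zeta estimates, and integration to recover $(\ref{3.3})$. This matches the paper's argument step for step (and is, if anything, slightly more explicit than the paper about the initial smoothing of $\mu$ and about pinning the limiting constant by the normalization $\int_0^1 F_{N,n}'=1$).
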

\begin{remark}
From $(\ref{3.3})$, we see that
\begin{equation}
\lim_{n\rightarrow\infty} F_{N,n}(x) = \rho_N ([0,x]), \label{3.4}
\end{equation}
where $\rho_N$ is the measure defined in $(\ref{2.5})$.
In fact, Theorem $\ref{Th.GKL}$ estimates the error
\begin{equation}
e_{N,n}(x) = e_{N,n}(x, \mu) = \mu (R_N^n \leq x) - \rho_N ([0,x]), \quad x \in [0, 1]. \label{3.5}
\end{equation}
\end{remark}
To prove Theorem \ref{Th.GKL} we need the following results.
\begin{lemma} \label{G-K.eq.}
For functions $\{F_{N,n}\}$ in $(\ref{3.2})$, the following Gauss-Kuzmin-type equation holds:
\begin{equation}
F_{N,n+1} (x) = \sum_{i \geq N}\left\{F_{N,n}\left(1 - \frac{N}{x+i}\right) - F_{N,n}\left(1-\frac{N}{i}\right)\right\} \label{3.6}
\end{equation}
for $x \in [0,1]$ and $n \in \mathbb{N}$.
\end{lemma}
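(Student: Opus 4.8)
The plan is to derive the recursion \eqref{3.6} directly from the definition of $F_{N,n}$ together with the branch structure of $R_N$. First I would observe that for $x \in [0,1]$,
\[
\{R_N^{n+1} \leq x\} = \{R_N^n \in R_N^{-1}([0,x])\},
\]
so the key is to identify $R_N^{-1}([0,x])$ as an explicit union of intervals. From \eqref{2.1}, on the branch where $a_1(y)=i$, i.e. where $\lfloor N/(1-y)\rfloor = i$, we have $R_N(y) = N/(1-y) - i$, and this branch covers exactly those $y$ with $N/(1-y) \in [i, i+1)$, that is $y \in \left[1 - \tfrac{N}{i}, 1 - \tfrac{N}{i+1}\right)$; since $N \geq 2$, these branches are indexed by the integers $i \geq N$ and are pairwise disjoint with union (up to a null set) equal to $[0,1)$. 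On such a branch, $R_N$ is increasing, so $R_N(y) \leq x$ together with $a_1(y)=i$ is equivalent to $y \in \left[1 - \tfrac{N}{i},\, 1 - \tfrac{N}{x+i}\right]$ (using $N/(1-y) - i \leq x \iff 1 - y \geq N/(x+i) \iff y \leq 1 - N/(x+i)$). Hence
\[
R_N^{-1}([0,x]) = \bigcup_{i \geq N} \left[1 - \frac{N}{i},\, 1 - \frac{N}{x+i}\right],
\]
a disjoint union.

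Next I would apply the measure $F_{N,n}(\cdot)$ — more precisely, the distribution of $R_N^n$ under $\mu$ — to this set. Because $\mu$ is non-atomic, the pushforward measure $\mu(R_N^n \in \cdot)$ is non-atomic as well, so countable additivity over the disjoint union and the fact that endpoints contribute nothing give
\[
F_{N,n+1}(x) = \mu\!\left(R_N^n \in R_N^{-1}([0,x])\right) = \sum_{i \geq N} \mu\!\left(R_N^n \in \left[1 - \frac{N}{i},\, 1 - \frac{N}{x+i}\right]\right).
\]
Each summand equals $F_{N,n}\!\left(1 - \frac{N}{x+i}\right) - F_{N,n}\!\left(1 - \frac{N}{i}\right)$, which is exactly \eqref{3.6}. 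The case $n = 0$ is included since $F_{N,0}(x) = \mu([0,x])$ is itself the distribution function of $R_N^0 = \mathrm{id}$.

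The only genuinely delicate points are bookkeeping ones: verifying that the branch endpoints $1 - N/i$ indeed range over a partition of $[0,1)$ as $i$ runs over integers $\geq N$ (in particular that the smallest relevant index is $N$, coming from $N/(1-y) \geq N \iff y \geq 0$), checking the direction of the inequalities when inverting the increasing map $R_N$ on each branch, and confirming that the point $x=1$ and the single point $y=1$ (where $R_N(1)=0$ is set by fiat) do not affect the identity — again harmless because $\mu$, and hence the law of $R_N^n$, is non-atomic. I do not expect any of these to pose a real obstacle; the lemma is essentially the transfer-operator identity \eqref{2.7} rewritten at the level of distribution functions rather than densities, and the computation mirrors the derivation of \eqref{2.7}–\eqref{2.9} already recorded in the paper.
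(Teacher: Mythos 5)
Your argument is correct and is essentially the paper's own proof: the paper likewise partitions the event $\{R_N^{n+1}\leq x\}$ according to the value of the digit $a_{n+1}=i$, obtaining $F_{N,n+1}(x)=\sum_{i\geq N}\mu\left(1-\tfrac{N}{i}\leq R_N^{n}\leq 1-\tfrac{N}{x+i}\right)$ and hence \eqref{3.6}, which is exactly your branch decomposition of $R_N^{-1}([0,x])$ written in terms of the digit. Your version merely spells out the details (monotonicity on each branch, disjointness, and the role of non-atomicity at the endpoints) that the paper leaves implicit.
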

\begin{proof}
From (\ref{2.1}) and (\ref{2.4}), we see that
\begin{equation}\label{3.7}
R_N^n(x) = 1 - \frac{N}{a_{n+1}+R_N^{n+1}}\, , \quad n \in \mathbb{N}_+.
\end{equation}
Now,
\begin{eqnarray*}
F_{N,n+1} (x) &=& \mu \left(R_{N}^{n+1} \leq x \right) = \sum_{i \geq N} \mu \left( 1-\frac{N}{i} \leq R_{N}^{n+1} \leq 1-\frac{N}{i+x} \right) \\
              &=& \sum_{i \geq N} \left\{F_{N,n}\left(1 - \frac{N}{x+i}\right) - F_{N,n}\left(1-\frac{N}{i}\right)\right\}.
\end{eqnarray*}
\end{proof}
\begin{remark}
Assume that for some $p \in\mathbb{N}$, the derivative $F'_{N,p}$ exists everywhere in $[0, 1]$ and is bounded.
Then it is easy to see by induction that $F'_{N,p+n}$ exists and is bounded for all $n \in \mathbb{N}_+$.
This allows us to differentiate $(\ref{3.6})$ term by term, obtaining
\begin{equation}
F'_{N,n+1}(x) = \sum_{i \in N} \frac{N}{(x+i)^2}F'_{N,n}\left(1-\frac{N}{x+i}\right). \label{3.8}
\end{equation}
\end{remark}
We introduce functions $\{ f_{N,n} \}$ as follows:
\begin{equation}
f_{N,n}(x) := (x+N-1)F'_{N,n}(x), \quad x \in [0, 1], \ n \in \mathbb{N}. \label{3.9}
\end{equation}
Then (\ref{3.8}) is
\begin{equation}
f_{N,n+1}(x) = \sum_{i \geq N} P_{N,i}(x)f_{N,n}\left(u_{N,i}(x)\right), \label{3.10}
\end{equation}
where $P_{N,i}(x)$ and $u_{N,i}(x)$ are given in (\ref{2.8}) and (\ref{2.9}), respectively.
\begin{lemma} \label{lem.3.2}
For $\{f_{N,n}\}$ in $(\ref{3.9})$, define $M_{N,n} := \displaystyle \max_{x \in [0, 1]}|f'_{N,n}(x)|$.
Then
\begin{equation}
M_{N,n+1} \leq q_N \cdot M_{N,n} \label{3.11}
\end{equation}
where
\begin{equation}
q_N = \sum_{i \geq N} \left( \frac{1}{i^3} +\frac{N}{i^2(i+1)} \right). \label{3.0120}
\end{equation}
\end{lemma}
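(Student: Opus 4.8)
The plan is to differentiate the functional equation \eqref{3.10} for $f_{N,n+1}$ term by term, estimate the resulting series, and show that the derivative of each term splits into a piece controlled by $M_{N,n}$ (with a small geometric factor) plus a piece that telescopes or is dominated by the same quantity. Concretely, from \eqref{3.10} we have
\begin{equation*}
f'_{N,n+1}(x) = \sum_{i \geq N} \left\{ P'_{N,i}(x)\, f_{N,n}\!\left(u_{N,i}(x)\right) + P_{N,i}(x)\, u'_{N,i}(x)\, f'_{N,n}\!\left(u_{N,i}(x)\right) \right\}.
\end{equation*}
Using $u_{N,i}(x) = 1 - N/(x+i)$ one computes $u'_{N,i}(x) = N/(x+i)^2$, and $P_{N,i}(x) = (x+N-1)/\bigl((x+i)(x+i-1)\bigr)$ so that $P_{N,i}(x)\,u'_{N,i}(x) = N(x+N-1)/\bigl((x+i)^3(x+i-1)\bigr)$. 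The second term is then bounded in absolute value by $M_{N,n}$ times $\sum_{i\geq N} N(x+N-1)/\bigl((x+i)^3(x+i-1)\bigr)$, and since $x\in[0,1]$ one has $x+N-1 \le x+i-1$, giving the crude bound $\sum_{i\geq N} N/(x+i)^3 \le \sum_{i\geq N} N/i^3$ — wait, that is not quite the shape of \eqref{3.0120}, so the splitting must be arranged so that one half of $q_N$ comes from the $f'$-term and the other half from handling the $P'$-term.

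For the first term, the key trick (this is Szüsz's device) is to use the mean value / antiderivative structure: $f_{N,n}$ is not small, but its \emph{oscillation} is controlled by $M_{N,n}$ once we subtract a suitable constant. Since $\sum_{i\geq N} P'_{N,i}(x) = \bigl(\sum_{i\geq N}P_{N,i}(x)\bigr)' = 0$ (because $\sum_{i\geq N}P_{N,i}(x)\equiv 1$, which follows from \eqref{2.7}–\eqref{2.8} applied to $f\equiv 1$, or by a direct telescoping of the partial fractions), we may replace $f_{N,n}(u_{N,i}(x))$ by $f_{N,n}(u_{N,i}(x)) - f_{N,n}(c)$ for any fixed $c$ without changing the sum. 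Choosing $c$ cleverly (e.g. $c = u_{N,i}$ evaluated at a reference point, or $c=0$) and applying the mean value theorem, $|f_{N,n}(u_{N,i}(x)) - f_{N,n}(c)| \le M_{N,n}\,|u_{N,i}(x) - c|$, and then the first term is bounded by $M_{N,n}\sum_{i\geq N}|P'_{N,i}(x)|\,|u_{N,i}(x)-c|$. A short computation with $P'_{N,i}(x)$ (which has a sign pattern making $\sum |P'_{N,i}|$ telescope after splitting into the ``$+$'' and ``$-$'' parts) should produce the term $\sum_{i\geq N} N/\bigl(i^2(i+1)\bigr)$, matching \eqref{3.0120}.

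Putting the two bounds together yields $M_{N,n+1} \le q_N M_{N,n}$ with $q_N$ as in \eqref{3.0120}, and one checks separately that $q_N < 1$ for every integer $N\ge 2$ — e.g. by comparing the series with $\sum_{i\ge N}\bigl(1/i^3 + N/i^3\bigr) = (N+1)\sum_{i\ge N}1/i^3$ and using $\sum_{i\ge 2}1/i^3 = \zeta(3)-1 < 0.21$, which already handles $N=2,3,4$, while for large $N$ the sum is $O(1/N)$; a uniform argument via the integral test, $\sum_{i\ge N}1/i^3 \le 1/(2(N-1)^2)$ and $\sum_{i\ge N} N/(i^2(i+1)) \le N\int_{N-1}^\infty dt/t^3 = N/(2(N-1)^2)$, gives $q_N \le (N+1)/(2(N-1)^2) < 1$ for all $N\ge 2$ after checking the single case $N=2$ by hand. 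The main obstacle I anticipate is the precise bookkeeping of the signs and the telescoping in $\sum_{i\geq N}|P'_{N,i}(x)|$ and in the choice of the centering constant $c$ so that the two error contributions assemble into \emph{exactly} the stated $q_N$ rather than something merely comparable; the analytic content is elementary, but getting the constant sharp enough that $q_N\to 0$ as $N\to\infty$ (as claimed in the introduction) requires care in not over-estimating $x+N-1$ against $x+i-1$ prematurely.
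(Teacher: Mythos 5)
Your setup is right --- differentiate \eqref{3.10} term by term, bound the $P_{N,i}\,u'_{N,i}\,f'_{N,n}$ part directly, and exploit that only the oscillation of $f_{N,n}$ (not its size) can enter through the $P'_{N,i}\,f_{N,n}$ part --- and your computation $P_{N,i}(x)u'_{N,i}(x)=N(x+N-1)/\bigl((x+i)^3(x+i-1)\bigr)\le N/(x+i)^3$ agrees with the paper. But the decisive step is missing. Your proposed device for the first part (use $\sum_{i\ge N}P'_{N,i}(x)=0$ to recenter $f_{N,n}$ by a constant $c$, then apply the mean value theorem and ``a short computation'' that ``should produce'' $\sum_{i\ge N}N/\bigl(i^2(i+1)\bigr)$) is never executed: you do not choose $c$, and you do not control $\sum_{i\ge N}\lvert P'_{N,i}(x)\rvert\,\lvert u_{N,i}(x)-c\rvert$. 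Note that $P'_{N,i}(x)=\frac{i-N}{(x+i-1)^2}-\frac{i+1-N}{(x+i)^2}$ changes sign in $i$ (it is negative at $i=N$ and positive for large $i$), so $\sum_i\lvert P'_{N,i}\rvert$ does not telescope in any obvious way, and it is not clear your route reproduces the stated constant. Since the lemma's entire content is inequality \eqref{3.11} with the \emph{specific} $q_N$ of \eqref{3.0120} (whose smallness for large $N$ drives the main theorem), an argument that yields ``something merely comparable'' --- as you yourself concede --- does not prove it.

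The paper closes this gap with a summation by parts rather than a centering constant. Since the term $\frac{i-N}{(x+i-1)^2}$ vanishes at $i=N$, reindexing gives
\[
\sum_{i\ge N}P'_{N,i}(x)\,f_{N,n}\!\left(u_{N,i}(x)\right)=\sum_{i\ge N}\frac{i+1-N}{(x+i)^2}\left[f_{N,n}\!\left(u_{N,i+1}(x)\right)-f_{N,n}\!\left(u_{N,i}(x)\right)\right],
\]
and the mean value theorem applied to each bracket yields $f'_{N,n}(\theta_i)\bigl(u_{N,i+1}(x)-u_{N,i}(x)\bigr)$ with $u_{N,i+1}(x)-u_{N,i}(x)=N/\bigl((x+i)(x+i+1)\bigr)$, so this whole sum is bounded by $M_{N,n}$ times an explicit convergent series in $i$. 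Combining with the second part and using $\frac{i+1-N}{i^3(i+1)}=\frac{1}{i^3}-\frac{N}{i^3(i+1)}$ together with $\frac{N}{i^3}-\frac{N}{i^3(i+1)}=\frac{N}{i^2(i+1)}$, the two contributions assemble into exactly $\sum_{i\ge N}\bigl(1/i^3+N/(i^2(i+1))\bigr)$. This is the idea your sketch is missing; your concluding estimate showing $q_N<1$ (which in any case belongs to Lemma \ref{lem.3.6}, not to this one) is fine once $q_N$ itself has been established.
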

\begin{proof}
Since
\begin{equation*}
P_{N,i}(x) =  \frac{i+1-N}{x+i} - \frac{i-N}{x+i-1},
\end{equation*}
we have
\begin{eqnarray} \label{3.012}
f'_{N,n+1}(x) &=& \sum_{i \geq N}
\left\{
P'_{N,i}(x) f_{N,n}\left(u_{N,i}(x)\right) + P_{N,i}(x) f'_{N,n}\left(u_{N,i}(x)\right) u'_{N,i}(x)
\right\} \nonumber \\
&=& \sum_{i \geq N}
\left\{
\left( \frac{i-N}{(x+i-1)^2} - \frac{i+1-N}{(x+i)^2} \right) f_{N,n}\left(u_{N,i}(x)\right) + P_{N,i}(x) f'_{N,n}\left(u_{N,i}(x)\right) \frac{N}{(x+i)^2}
\right\} \nonumber \\
&=& \sum_{i \geq N}
\left\{
\frac{i+1-N}{(x+i)^2}\left[ f_{N,n}\left(u_{N,i+1}(x)\right) - f_{N,n}\left(u_{N,i}(x)\right) \right]+P_{N,i}(x) f'_{N,n}\left(u_{N,i}(x)\right) \frac{N}{(x+i)^2}
\right\} \nonumber \\
&=& \sum_{i \geq N}
\left\{\frac{i+1-N}{(x+i)^3(x+i+1)} f'_{N,n}\left(\theta_i\right) + f'_{N,n}\left(u_{N,i}(x)\right) \frac{NP_{N,i}(x)}{(x+i)^2}
\right\}
\end{eqnarray}
where $u_{N,i+1}(x) < \theta_i < u_{N,i}(x)$. Now (\ref{3.012}) implies
\begin{equation}
M_{N,n+1} \leq M_{N,n} \cdot \max_{x \in [0, 1]} \left( \sum_{i \geq N} \frac{i+1-N}{(x+i)^3(x +i+1)}
+ N \sum_{i \geq N} \frac{x+N-1}{(x+i)^3(x+i-1)} \right). \label{3.013}
\end{equation}
We now must calculate the maximum value of the sums in this expression.
Using that $x \in [0, 1]$ and $i \geq N$, we get
\begin{equation*}
\frac{i+1-N}{(x+i)^3(x +i+1)} \leq \frac{i+1-N}{i^3(i+1)}
\end{equation*}
and
\begin{equation*}
\frac{x+N-1}{(x+i)^3(x+i-1)} \leq \frac{1}{(x+i)^3} \leq \frac{1}{i^3}.
\end{equation*}
Thus,
\begin{equation}
M_{N,n+1} \leq M_{N,n} \cdot \sum_{i \geq N}\left( \frac{1}{i^3} +\frac{N}{i^2(i+1)} \right) \label{3.016}
\end{equation}
and the proof is complete.
\end{proof}
\noindent \textbf{Proof of Theorem \ref{Th.GKL}}.
For $\{F_{N,n}\}$ in $(\ref{3.2})$, we introduce a function $W_{N,n}(x)$ such that
\begin{equation}
F_{N,n} (x) = \frac{1}{\log \left(\frac{N}{N-1}\right)} \log \left(\frac{x+N-1}{N-1}\right) + W_{N,n}(x) \label{3.12}
\end{equation}

Because $F_{N,n}(0)=0$ and $F_{N,n}(1)=1$, we have $W_{N,n}(0)=W_{N,n}(1)=0$.
To prove Theorem \ref{Th.GKL}, we have to show the existence of a constant $0 < q_N < 1$ such that
\begin{equation}
W_{N,n}(x) = {\mathcal O}(q^n). \label{3.13}
\end{equation}

For $\{f_{N,n}\}$ in (\ref{3.9}), if we can show that
$f_{N,n}(x) = \frac{1}{\log \left(\frac{N}{N-1}\right)} + {\mathcal O}(q_N^n)$, then integrating (\ref{3.9}) will show (\ref{3.3}).

To demonstrate that $f_{N,n}(x)$ has this desired form, it suffices to prove the following lemma.
\begin{lemma} \label{lem.3.6}
For any $x \in [0, 1]$ and $n \in \mathbb{N}$ there exists a constant $q_N:=q_N(x)$ with $0 < q_N < 1$ such that
\begin{equation}
f'_{N,n}(x) = {\mathcal O}(q_N^n). \label{3.14}
\end{equation}
Moreover, for any positive integer $N \geq 2$ the following estimate holds
\begin{equation}\label{3.19}
  \frac{1}{N^3} + \frac{1}{2N(N+1)} + \frac{1}{2N} < q_N < \frac{1}{2N( N - 1)} +\frac{1}{N} - \frac{1}{2N+1}.
\end{equation}
\end{lemma}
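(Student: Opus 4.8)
\noindent\textbf{Proof strategy for Lemma~\ref{lem.3.6}.}
The plan is to handle the two assertions separately. The decay estimate $(\ref{3.14})$ is an immediate consequence of the contraction furnished by Lemma~\ref{lem.3.2}, once the bounds $(\ref{3.19})$ have been established (these in particular give $q_N<1$); so the substantive part is $(\ref{3.19})$, which I obtain by evaluating the series $(\ref{3.0120})$ in closed form and then sandwiching it by elementary comparison inequalities.

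For $(\ref{3.14})$: recall from the remark following Lemma~\ref{G-K.eq.} that for some $p\in\mathbb{N}$ the derivative $F'_{N,p}$ exists everywhere and is bounded, hence $M_{N,p}<\infty$, where $M_{N,n}$ and $f_{N,n}$ are as in Lemma~\ref{lem.3.2} and $(\ref{3.9})$. Iterating $(\ref{3.11})$ gives $M_{N,n}\le q_N^{\,n-p}M_{N,p}$ for $n\ge p$; granting $q_N<1$ (shown below), this yields $|f'_{N,n}(x)|\le M_{N,n}=\mathcal{O}(q_N^{\,n})$ uniformly in $x\in[0,1]$, with the implied constant depending only on $N$ and on the starting index $p$. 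This is $(\ref{3.14})$.

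To prove $(\ref{3.19})$ I would first rewrite $q_N$. The partial-fraction identity $\frac{N}{i^2(i+1)}=\frac{N}{i^2}-N\big(\frac1i-\frac1{i+1}\big)$ together with telescoping of the last sum turns $(\ref{3.0120})$ into
\begin{equation*}
q_N=\sum_{i\ge N}\frac1{i^3}+N\sum_{i\ge N}\frac1{i^2}-1=\zeta(3,N)+N\,\zeta(2,N)-1,
\end{equation*}
where $\zeta(s,N):=\sum_{i\ge N}i^{-s}$ is the Hurwitz zeta function; this is the explicit expression referred to in the Introduction. For the upper bound I would use the inequalities $\frac1{i^3}<\frac1{(i-1)i(i+1)}$ and $\frac1{i^2}<\frac1{(i-\frac12)(i+\frac12)}$, both valid for $i\ge2$. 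Writing $\frac1{(i-1)i(i+1)}=\frac12\big[\big(\frac1{i-1}-\frac1i\big)-\big(\frac1i-\frac1{i+1}\big)\big]$ and summing telescopes to $\zeta(3,N)<\frac1{2N(N-1)}$; summing $\frac1{i^2}<\frac1{i-\frac12}-\frac1{i+\frac12}$ from $i=N+1$ gives $\zeta(2,N+1)<\frac2{2N+1}$, whence $\zeta(2,N)=\frac1{N^2}+\zeta(2,N+1)<\frac1{N^2}+\frac2{2N+1}$ and therefore $N\zeta(2,N)-1<\frac1N-\frac1{2N+1}$. Adding the two estimates yields exactly the right-hand side of $(\ref{3.19})$. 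For the lower bound I would split off the term $i=N$ from each series and estimate the tails: the integral comparison for decreasing functions gives $\zeta(3,N+1)>\int_{N+1}^{\infty}t^{-3}\,dt=\frac1{2(N+1)^2}$, while the convexity of $t\mapsto t^{-2}$ gives the sharper trapezoidal bound $\zeta(2,N+1)>\int_{N+1}^{\infty}t^{-2}\,dt+\frac12(N+1)^{-2}=\frac1{N+1}+\frac1{2(N+1)^2}$. Substituting into $q_N=\zeta(3,N)+N\zeta(2,N)-1$ and simplifying — the term $\frac{N}{2(N+1)^2}$ combines with $\frac1{2(N+1)^2}$ to $\frac1{2(N+1)}$, and then $\frac1{2(N+1)}+\frac1{N(N+1)}=\frac1{2N}+\frac1{2N(N+1)}$ — produces precisely $\frac1{N^3}+\frac1{2N(N+1)}+\frac1{2N}$, the left-hand side of $(\ref{3.19})$. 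Finally $q_N<\frac1{2N(N-1)}+\frac1N\le\frac34<1$ for $N\ge2$, which closes the argument for $(\ref{3.14})$.

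The step I expect to require the most care is matching the \emph{exact} lower bound in $(\ref{3.19})$: a plain comparison $\zeta(s,N)>\int_N^{\infty}t^{-s}\,dt$, and even the half-term trapezoidal refinement applied directly to $\zeta(2,N)$ and $\zeta(3,N)$, loses too much and falls short of the stated constant already for small $N$; one really has to peel off the $i=N$ term of \emph{each} series and apply the trapezoidal refinement to the order-$2$ tail $\zeta(2,N+1)$. The upper bound, by contrast, comes out cleanly from the two one-line telescoping inequalities, with the intermediate estimate $N\zeta(2,N)-1<\frac1N-\frac1{2N+1}$ emerging as an exact identity between the bounding expressions.
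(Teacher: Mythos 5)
Your proposal is correct, and it follows the paper's overall architecture exactly: reduce $(\ref{3.14})$ to showing $q_N<1$ via iteration of Lemma~\ref{lem.3.2}, rewrite $q_N=\zeta(3,N)+N\zeta(2,N)-1$ by the same partial-fraction telescoping, and then sandwich the two Hurwitz zeta values. Where you diverge is in the choice of comparison inequalities. The paper bounds $\zeta(2,N)$ from below and $\zeta(3,N)$ from above by introducing the optimized shifts $a=\frac12\bigl(\sqrt{4N^2+1}-(2N+1)\bigr)$ and $b=N\bigl(\sqrt{N^2+1}-N\bigr)$, obtaining $\zeta(2,N)\ge \frac{2}{\sqrt{4N^2+1}-1}$ and $\zeta(3,N)<\frac{1}{2N(\sqrt{N^2+1}-1)}$, and then relaxes these to the rational expressions in $(\ref{3.19})$; your telescoping with $\frac{1}{(i-1)i(i+1)}$ and $\frac{1}{(i-\frac12)(i+\frac12)}$ reaches the same upper bound more directly. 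For the lower bound your route is not just more elementary but actually sounder: the paper's final comparison asserts $\frac{1}{2(N^2+N+1/2)}>\frac{1}{2N(N+1)}$, which is reversed (the left side is the smaller one), so the paper's chain does not quite deliver the stated constant, whereas your device of peeling off the $i=N$ terms and applying the trapezoidal bound to $\zeta(2,N+1)$ lands exactly on $\frac{1}{N^3}+\frac{1}{2N(N+1)}+\frac{1}{2N}$ after the algebraic recombination you describe. The one point to state explicitly in a final write-up is the hypothesis behind $(\ref{3.14})$: as in the paper's remark, one needs $F'_{N,p}$ to exist and be bounded for some $p$ (e.g.\ $p=0$ when $\mu$ is Lebesgue measure), so that $M_{N,p}<\infty$ and the iteration $M_{N,n}\le q_N^{\,n-p}M_{N,p}$ can start.
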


\begin{proof}
Let $q_N$ be as in Lemma \ref{lem.3.2}. Using this lemma, to show (\ref{3.14}) it is enough to prove that $q_N < 1$.
%
First, we will write $q_N$ in terms of Hurwitz zeta functions. Thus,
\begin{eqnarray*}
q_N &=& \sum_{i \geq N} \left( \frac{1}{i^3} +\frac{N}{i^2(i+1)} \right) = \sum_{i \geq N} \left( \frac{1}{i^3}+\frac{N}{i^2}\right) - 1 \\
    &=& \zeta(3, N)+N \zeta(2, N)-1.
\end{eqnarray*}
For $i \geq N$ and $a:=\frac{1}{2}\left( \sqrt{4N^2+1} -(2N+1) \right) > -\frac{1}{2}$, we have
\[
a^2+(2N+1)a+N=0
\]
and
\[
a^2+(2N+1)a+N+(2a+1)(i-N) \geq 0,
\]
i.e.,
\[
i^2 \leq (i+a)(i+1+a).
\]
Hence,
\[
\zeta(2,N) \geq \sum_{i\geq N} \left( \frac{1}{i+a} - \frac{1}{i+1+a} \right) = \frac{1}{N+a} = \frac{2}{\sqrt{4N^2+1}-1}.
\]
Also we have
\[
\zeta(2,N) < \frac{1}{N^2} + \sum_{i \geq N+1} \frac{1}{(i-1/2)(i+1/2)} = \frac{1}{N^2} +\frac{2}{2N+1}.
\]
For $i \geq N$ and $b:=N\left(\sqrt{N^2+1}-N\right)<\frac{1}{2}$, we have
\[
b^2+2N^2b-N^2=0
\]
and
\[
b^2+2N^2b-N^2+(2b-1)(i^2-N^2) \leq 0,
\]
i.e.,
\[
i^4\geq (i^2-i+b)(i^2+i+b).
\]
Hence
\begin{eqnarray*}
\zeta(3,N) &<&\frac{1}{2}\sum_{i \geq N} \left( \frac{1}{(i-1)i+b} - \frac{1}{i(i+1)+b} \right) = \frac{1}{2(N^2-N+b)} \\
           &=& \frac{1}{2N(\sqrt{N^2+1} - 1)}.
\end{eqnarray*}
Also, we have
\begin{eqnarray*}
\zeta(3,N) &>& \frac{1}{N^3}+\frac{1}{2} \sum_{i \geq N+1} \left( \frac{1}{i^2-i+1/2} + \frac{1}{i^2+i+1/2} \right) \\
           &=& \frac{1}{N^3} + \frac{1}{2(N^2+N+1/2)}.
\end{eqnarray*}
Therefore,
\begin{eqnarray*}
q_N &<& \frac{1}{2N(\sqrt{N^2+1} - 1)} + N \left( \frac{1}{N^2} +\frac{2}{2N+1} \right)-1 \\
    &<& \frac{1}{2N(N-1)} + \frac{1}{N} - \frac{1}{2N+1},
\end{eqnarray*}
and
\begin{eqnarray*}
q_N &>& \frac{1}{N^3} + \frac{1}{2(N^2+N+1/2)} + \frac{2N}{\sqrt{4N^2+1}-1} - 1 \\
    &>& \frac{1}{N^3} + \frac{1}{2(N^2+N+1/2)} + \frac{2N+1}{2N} - 1 \\
    &=& \frac{1}{N^3} + \frac{1}{2(N^2+N+1/2)} + \frac{1}{2N} \\
    &>&  \frac{1}{N^3} + \frac{1}{2N(N+1)} + \frac{1}{2N}.
\end{eqnarray*}

For example, we have
\begin{center}

\begin{tabular}{| l | l | l | c | c | c |}
\hline
N               & Lower bound of $q_N$                 &  Upper bound of $q_N$  \\ \hline\hline
2               & $0.4583333333333333$                 &  $0.55$              \\  \hline
10              & $0.055545454545454544$               &  $0.05793650793650794$ \\ \hline
100             & $0.00505050495049505$                &  $0.0050753806723955975$  \\ \hline
500             & $0.001002004007984032$               &  $0.001003003009015033$ \\ \hline
1000            & $0.0005005005004995005$              &  $0.0005007503755629693$ \\ \hline
5000            & $0.00010002000400079984$             &  $0.00010003000300090015$ \\ \hline
10000           & $0.00005000500050004999$             &  $0.000050007500375056254$ \\ \hline
\end{tabular}
\end{center}

\end{proof}

\end{document}